\newcommand{\SH}{\mathcal{SH}}
\newcommand{\LL}{\mathbf{L}}
\newtheorem{definition}{\bf Definition}[section]
\newtheorem{lemma}{\bf Lemma}[section]
\newtheorem{corollary}{\bf Corollary}[section]
\title{A recursive formula for the number of semi-Heyting algebras definable on a finite chain}
\author{Luiz F.~Monteiro}
\affil{Universidad Nacional del Sur}
\author{Juan M.~Cornejo}
\author{Ignacio D.~Viglizzo} 
\affil{INMABB-UNS-CONICET, Departamento de Matemática, Av. Alem 1253, Bahía Blanca, Buenos Aires, Argentina.}
\affil[ ]{\tt lfmonteiro0510@gmail.com, jmcornejo@uns.edu.ar, viglizzo@gmail.com}
\date{}
\begin{document}
\maketitle

\begin{flushright}         
\textit{To our friend Hanamantagouda P.~Sankappanavar}
\end{flushright}

\thispagestyle{empty}
\font\fivrm=cmr5 \relax
\input{prepictex}
\input{pictex}
\input{postpictex}

\begin{abstract}
	We provide a recursive construction of all the semi-Heyting algebras that can be defined on a chain with $n$ elements. This construction allows us to count them easily. We also compare the formula for the number of semi-Heyting chains thus obtained to the one previously known. 
\end{abstract}

\section{Preliminaries}

\begin{definition}
{\rm \cite{sankappanavar1985semi}} An algebra $\LL = \langle L, \vee, \wedge, \rightarrow, 0, 1\rangle$ is a semi-Heyting algebra if the following conditions hold:
\begin{quote}
	\begin{enumerate}
		\item[$(SH 1)$] $\langle L, \vee, \wedge, 0, 1\rangle $ is a lattice with 0 and 1.
		\item[$(SH 2)$] $x \wedge (x \rightarrow y) \approx x \wedge y$.
		\item[$(SH 3)$] $x \wedge (y \rightarrow z) \approx x \wedge [(x \wedge y) \rightarrow (x \wedge
		z)]$.
		\item[$(SH 4)$] $x \rightarrow x \approx 1$.
	\end{enumerate}
\end{quote}
\end{definition}
\medskip

We will denote by $\mathbf{\SH}$ the variety of semi-Heyting algebras.

 Let $C_n=\{0= a_0, a_1,\cdots, a_{n-2}, a_{n-1}=1\}$ be the chain with $n$ elements, with $a_0<a_1<\ldots<a_{n-1}$. We denote with $|X|$ the cardinality of a set $X$, and if $X$ is a poset, and $y\in X$, then we write $[y)$ to denote the set $\{x\in X:y\le x\}$.   Therefore,
\begin{equation}
\label{factI}
 |[a_i)|= n-i,\; 0\leq i\leq n-1.
 \end{equation}
 
 We denote with $N(n)$ the number of different implication operations that may be defined on the chain $C_n$ so that it becomes a semi-Heyting algebra.
 
If $(C_n,\to)$ is in $\SH$, then we associate to it a matrix of size $n\times n$, $M=(m_{(i,j)})$ where $m_{(i,j)}= a_i \to a_j,$ for $0\leq i,j\leq n-1$, this is the table of the implication operation. Then $m_{(i,i)}=a_i\to a_i{\stackrel{{\rm (SH4)}}{=}}1$.	

We know from  \cite{sankappanavar2008semi} Lemma 4.2, (iii) that:  

\begin{equation}
\label{factII}
\mbox{If}\;\; a_j< a_i\;\; {\rm then }\;\; a_i \to a_j=a_j.
\end{equation}

Therefore $m_{(i,j)}= a_j$ for $j< i$, and since $0< a_i $ for $i\geq 1$ then $a_i \to 0=0$, so $m_{(i,0)}=0$ for $i \geq 1$.

\begin{lemma} {\rm \cite{Abad2010variety} (Lemma 2.4)}  \label{0 implica 1 = 0}
	Let $\LL$ be a semi-Heyting chain. The following conditions are equivalent:
	\begin{enumerate}  [\rm(a)]
		\item $0 \to a = 0$ for some $a \in L$ with $a \not= 0$.
		\item $0 \to b = 0$ for every $b \in L$ with $b \not= 0$.
	\end{enumerate}
\end{lemma}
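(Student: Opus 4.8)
The plan is to handle the two implications separately. The direction (b) $\Rightarrow$ (a) is immediate: a (nontrivial) semi-Heyting chain has a nonzero element — for instance $1$, since $0 \neq 1$ — so specializing (b) to $b = 1$ produces $0 \to 1 = 0$, which exhibits the element demanded by (a).

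For the substantive direction (a) $\Rightarrow$ (b), I would first extract a convenient instance of (SH3). Substituting $y = 0$ and using $x \wedge 0 = 0$, axiom (SH3) yields, for all $x, z \in L$,
\[
x \wedge (0 \to z) = x \wedge \bigl[\,0 \to (x \wedge z)\,\bigr].
\]
The point of this particular specialization is that it keeps the shape ``$0 \to (-)$'' on both sides, so it relates the unknown value $0 \to z$ to the value $0 \to (x \wedge z)$ after meeting with $x$. This is the only genuinely creative choice in the argument.

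Now assume $0 \to a = 0$ with $a \neq 0$, and let $b \neq 0$ be arbitrary. Since $L$ is a chain, $a$ and $b$ are comparable, and I would split accordingly. If $a \leq b$, apply the identity with $x = a$ and $z = b$: then $a \wedge b = a$, so the right-hand side collapses to $a \wedge (0 \to a) = a \wedge 0 = 0$, giving $a \wedge (0 \to b) = 0$. If $b \leq a$, apply it with $x = b$ and $z = a$: then $b \wedge a = b$, so the left-hand side becomes $b \wedge (0 \to a) = b \wedge 0 = 0$, giving $b \wedge (0 \to b) = 0$. In both cases one arrives at an equation $p \wedge (0 \to b) = 0$ with $p \in \{a, b\}$ nonzero.

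The concluding step invokes a basic property of chains: if $p \wedge q = 0$ and $p \neq 0$, then $q = 0$, because the two comparable elements have as meet the smaller one, which cannot equal the nonzero $p$. Taking $q = 0 \to b$ forces $0 \to b = 0$, which is exactly (b). I do not anticipate a serious obstacle: the argument is short once the $y=0$ specialization of (SH3) is isolated, and the totality of the order does the rest.
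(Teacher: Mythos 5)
Your proof is correct, but note that there is nothing in the paper to compare it against: the paper states this lemma without proof, quoting it as Lemma 2.4 of \cite{Abad2010variety}, so your argument supplies a self-contained proof where the paper relies on a citation. Your key move is sound: the $y=0$ instance of (SH3) gives $x \wedge (0 \to z) = x \wedge [0 \to (x \wedge z)]$, and the two specializations ($x=a$, $z=b$ when $a \le b$; $x=b$, $z=a$ when $b \le a$) do reduce everything to the hypothesis $0 \to a = 0$, with the comparability of $a$ and $b$ licensing the case split. Your concluding step is also where the chain hypothesis enters essentially and correctly: in a chain, $p \wedge q = \min(p,q)$ is one of the two elements, so $p \wedge q = 0$ with $p \neq 0$ forces $q = 0$ --- an inference unavailable in a general lattice, which is why the lemma is stated for chains only. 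One edge case you already flag with your parenthetical ``(nontrivial)'' deserves its explicit mention: in the one-element chain $0 = 1$, condition (b) holds vacuously while (a) fails for want of any $a \neq 0$, so (b) $\Rightarrow$ (a) requires $0 \neq 1$; this is harmless here, since the paper only invokes the lemma for the chains $C_n$ with $n \ge 2$, as in \eqref{factIII}.
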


As a particular case, we have:
\begin{equation} \label{factIII}
\mbox{If\ } 0\to 1=0 \mbox{\ then\ } 0\to a_i=0 \mbox{ \ for every\ } a_i, \mbox{\ with\ } i>0,
\end{equation}

so if $m_{(0,n-1)}=0$  then $m_{(0,i)}= 0$ for $1\leq i\leq n-1$.

\begin{lemma} {\rm  \cite{Abad2010variety}(Lemma 2.5)}  \label{implicacion}
	Let $\LL$ be a Semi-Heyting chain and let $a,b,c \in L$, $a \not= 1$. If $a \to 1 = b$ then for $c > a$
	
	\hspace{1cm}$\left\{
	\begin{array}{ll}
	a \to c = b & \hbox{if \ $b<c$;} \\
	a \to c \in [c) & \hbox{if \ $b \ge c$.}
	\end{array}
	\right. $ \end{lemma}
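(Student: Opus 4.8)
The plan is to squeeze everything out of axiom $(SH3)$ by making a single well-chosen substitution, and then to exploit the fact that the underlying lattice is a chain. First I would instantiate $(SH3)$, namely $x\wedge(y\to z)\approx x\wedge[(x\wedge y)\to(x\wedge z)]$, with $x=c$, $y=a$ and $z=1$. This yields
\[
c\wedge(a\to 1)=c\wedge\big[(c\wedge a)\to(c\wedge 1)\big].
\]
Because $a<c$ in the chain we have $c\wedge a=a$, and of course $c\wedge 1=c$; substituting the hypothesis $a\to 1=b$ on the left gives the key identity
\[
c\wedge b=c\wedge(a\to c).
\]

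With this identity in hand, the rest is a short case analysis driven by the totality of the order. Suppose first that $b<c$. Then $c\wedge b=b$, so $c\wedge(a\to c)=b<c$. On a chain $c\wedge(a\to c)$ equals either $c$ (when $a\to c\ge c$) or $a\to c$ (when $a\to c<c$); since the value is strictly below $c$ the first alternative is impossible, so $a\to c=c\wedge(a\to c)=b$, which is the first case of the statement. Suppose instead that $b\ge c$. Then $c\wedge b=c$, hence $c\wedge(a\to c)=c$, which on a chain forces $c\le a\to c$, that is $a\to c\in[c)$; this is the second case.

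I expect the only real obstacle to be spotting the correct instantiation of $(SH3)$: the substitution must turn the inner terms $x\wedge y$ and $x\wedge z$ into $a$ and $c$ respectively so that $a\to c$ appears, and this is exactly what $x=c$, $y=a$, $z=1$ achieves thanks to $a<c$. Once the identity $c\wedge b=c\wedge(a\to c)$ is isolated, the conclusion follows purely from the linearity of the order, and the hypothesis $a\neq 1$ serves only to guarantee that some $c>a$ exists. As a sanity check one may also note, via $(SH2)$ and $(SH4)$, that $a\wedge b=a$, so $a\le b$; this is consistent with both cases but is not needed in the argument.
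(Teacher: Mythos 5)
Your proof is correct: instantiating $(SH3)$ at $x=c$, $y=a$, $z=1$ yields $c\wedge b=c\wedge(a\to c)$ (using $c\wedge a=a$ since $a<c$), and the case split on $b<c$ versus $b\ge c$ then follows from the linearity of the order exactly as you argue. The paper itself states this lemma only by citation to \cite{Abad2010variety} (Lemma 2.5) without reproducing a proof, and your argument is essentially the standard one for that result, so there is nothing to flag.
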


As a particular case: 
\begin{multline}
\label{factIV}
\mbox{If\ } 0\to 1= a_i \mbox{\ then for\ } a_j>0:\\ 0\to a_j=a_i \mbox{\ for\ } i< j \mbox{ \ and\ } 0\to a_j\in [a_j) \mbox{\ for\ } j\leq i.
\end{multline}
Therefore, if $m_{(0,n-1)}= a_i$ then $m_{(0,j)}=a_i$ for $i<j$ and $m_{(0,j)}\in [a_j)$ for $j\leq i$.

\medskip
 	 
For $n\ge 2$, let  ${\cal S}(n-1)$ be the set of matrices of size $(n-1) \times (n-1)$ such that they define an algebra in $\SH$ over the chain $a_1<a_2<\cdots < a_{n-1}=1$ and let ${\cal M}(n)$ be the set of matrices of size  $n \times n$ defining a semi-Heyting algebra over  $C_n$. 

\begin{lemma}\label{submatrix}
	If  ${\bf C}_n=\langle C_n,\land,\lor,\to, 0,1\rangle\in \SH$, then $${\bf S}_{n-1}=\langle C_n\setminus\{a_0\},\land,\lor,\to,a_1, 1\rangle\in \SH.$$
\end{lemma}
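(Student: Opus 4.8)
The plan is to show that the subset $S = C_n \setminus \{a_0\} = \{a_1, \ldots, a_{n-1}\}$, equipped with the restrictions of $\wedge, \vee, \to$ and the designated bounds $a_1$ and $1$, is an algebra of the correct similarity type satisfying $(SH1)$--$(SH4)$. The first task is to confirm that this is even a well-defined algebra, i.e.\ that $S$ is closed under all four operations. Closure under $\wedge$ and $\vee$ is immediate, since $S$ is an up-set of the chain $C_n$: for $i,j \ge 1$ we have $a_i \wedge a_j = a_{\min(i,j)}$ and $a_i \vee a_j = a_{\max(i,j)}$, both with index $\ge 1$. The constants $a_1$ and $1 = a_{n-1}$ clearly belong to $S$.

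The crux is closure under $\to$: I must verify that $a_i \to a_j \ne a_0 = 0$ whenever $i,j \ge 1$. I would split into two cases according to the order of $a_i$ and $a_j$. If $a_j < a_i$, then by (\ref{factII}) we have $a_i \to a_j = a_j$, which lies in $S$ because $j \ge 1$. If $a_i \le a_j$, I invoke $(SH2)$ with $x = a_i$ and $y = a_j$: since $a_i \le a_j$ gives $a_i \wedge a_j = a_i$, the identity $a_i \wedge (a_i \to a_j) = a_i \wedge a_j$ becomes $a_i \wedge (a_i \to a_j) = a_i$, whence $a_i \le a_i \to a_j$. As $i \ge 1$ forces $a_i \ge a_1 > 0$, we conclude $a_i \to a_j \ge a_1$, so it too lies in $S$. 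This establishes that $\mathbf{S}_{n-1}$ is a genuine algebra in the signature of $\SH$.

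It remains to check the four axioms for $\mathbf{S}_{n-1}$. Condition $(SH1)$ holds because $S$, as a sub-chain of $C_n$, is itself a bounded lattice with least element $a_1$ and greatest element $1$, and its lattice operations coincide with those inherited from $C_n$. For $(SH2)$, $(SH3)$ and $(SH4)$, I observe that each is an identity not mentioning the bottom constant $0$, and that $(SH4)$ mentions only the top $1$, which is the same element in $\mathbf{S}_{n-1}$ as in $\mathbf{C}_n$. Since $\mathbf{C}_n$ satisfies these identities for all of its elements, in particular for all tuples drawn from $S$, and since the operations of $\mathbf{S}_{n-1}$ are exactly the restrictions of those of $\mathbf{C}_n$, which by the closure argument take values back in $S$, every subterm evaluates identically in both structures and the identities are inherited verbatim by $\mathbf{S}_{n-1}$.

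The only genuinely non-routine step is the closure of $S$ under $\to$ in the case $a_i \le a_j$: here the re-designation of $a_1$ as the new bottom could in principle be threatened by some implication evaluating to $0$, and it is precisely $(SH2)$ that rules this out. Once closure is secured, the verification of the axioms reduces to the standard fact that equational laws pass to subalgebras.
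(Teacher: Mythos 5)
Your proof is correct and follows essentially the same route as the paper: the only substantive point is closure of $C_n\setminus\{a_0\}$ under $\to$, which you, like the paper, settle via (SH2), after which (SH2)--(SH4) transfer because identities pass to subalgebras. The paper handles closure in a single stroke (if $a\to b=0$, then (SH2) gives $a\wedge b=0$, contradicting $a,b\neq 0$), whereas you split into the cases $a_j<a_i$ (using \eqref{factII}) and $a_i\le a_j$ (using (SH2)); this is only a cosmetic difference.
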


\begin{proof}
It is clear that $\langle C_n\setminus\{a_0\},\land,\lor\rangle$ is a sublattice of ${\bf C}_n$, with top element $1=a_{n-1}$ and bottom element $a_1$. We need to prove that for all $a,b\in C_n\setminus\{a_0\}=[a_1)$, then $a\to b\neq a_0$. If $ a \to b = a_0=0$ then by (SH2), $ 0= a \land 0= a \land (a\to b)= a \land b$, contradicting the fact that $a$ and $b$ are not $0$.

Since the identities (SH2), (SH3), and (SH4) hold in ${\bf C}_n$, they hold in ${\bf S}_{n-1}$ as well.
\end{proof}

It follows from Lemma \ref{submatrix} that if
 $M \in  {\cal M}(n)$ then the submatrix $S=(s_{(i,j)})$ of $M$ defined by $s_{(i,j)}=m_{(i,j)}$, for $1\leq i,j\leq n-1$ belongs to 
 ${\cal S}(n-1)$.

$$
\begin{minipage}{10cm} 
\beginpicture
\setcoordinatesystem units <3mm,3mm>
\setplotarea x from 14 to 32, y from 17 to 30
\put {$ 0$} [c] at  17 33 
\put {$ a_1$} [c] at  21 33 
\put {$1$} [c] at  33 33 
\put {$ 0$} [c] at  15 30
 \put {$ a_1$} [c] at  15 27
\put {$ 1$} [c] at  15 17

\put {$ S$} [c] at  26 23

\setlinear \plot 16 16  16  34 /
\setlinear \plot 14 32 34 32  /
\setlinear \plot 19 16  19  29  34   29  34 16 19 16  /
\endpicture
\end{minipage}
$$

\section{Construction and counting}

In this section we will give a recursive construction to obtain all the semi-Heyting algebras definable on the chain with $n$ elements, or equivalently, all the matrices in ${\cal M}(n)$. This will provide with a simple way of counting them.

It is clear that renaming the elements in a finite chain, there is a bijection between the sets ${\cal M}(n)$ and ${\cal S}(n)$ for every natural number $n\ge 2$, so they have the same number of elements.

 Given  $S=(s_{(i,j)})\in {\cal S}(n-1)$, we denominate $\to_S$ the implication operation on the chain $a_1<a_2<\cdots < a_{n-1}=1$ defined by the table given by $S$. Starting from $S$ we will define matrices  $M=M(S)$ of size $n\times n$, so that they verify the conditions: 
 \begin{equation} \label{factV}
a_i\to_M a_j = a_i\to_S a_j, \; 1\leq i,j \leq n-1,
 \end{equation}
 \begin{equation} \label{factVI}
 0\to_M 0=1 \mbox{\ and}
 \end{equation}
 \begin{equation} \label{factVII}
a_i\to_M 0=0 \mbox{\ for\ } 1\leq i \leq n-1.
 \end{equation}

We may depict the part of the new matrix we have defined so far: 
$$
\begin{minipage} {10cm}
\beginpicture
\setcoordinatesystem units <3mm,3mm>
\setplotarea x from 14 to 32, y from 17 to 30
\put {$ 0$} [c] at  17 33 
\put {$ a_1$} [c] at  21 33 
\put {$1$} [c] at  33 33
\put {$\ldots$} [c] at  26 33
\put {$ 0$} [c] at  15 30
 \put {$ a_1$} [c] at  15 28
\put {$ 1$} [c] at  15 17

\put {$ 1$} [c] at  17 30 
\put {$ \vdots$} [c] at  15 23 
\put {$ 1$} [c] at  17 30  

\put {$ 0$} [c] at  17 28
\put {$ 0$} [c] at  17 17 
\put {$0$} [c] at  17 20 
 \put {$\vdots$} [c] at  17 23 
\put {$0$} [c] at  17 26 

\put {$ S$} [c] at  26 23

\setlinear \plot 16 16  16  34 /
\setlinear \plot 14 32 34 32  /
\setlinear \plot 19 16  19  29  34   29  34 16 19 16  /
\endpicture
\end{minipage}
$$

 From conditions \eqref{factV} and \eqref{factVI} it follows that  (SH4) holds. Now we want to check that the identity (SH2): $x\wedge (x \to_M y) \approx x\wedge y$ holds as well. If $x=0$ or \linebreak[3] $x,y\in \{a_1,\cdots a_{n-1}=1\}$, (SH2) holds and if $x=a_i$ with $i>0$, $a_i\wedge (a_i\to_M 0){\stackrel{{\eqref{factVII}}}{=}}a_i\wedge 0.$

To check that (SH3),  $x\wedge (y \to_M z) \approx x\wedge[(x\wedge y)\to_M (x\wedge z)]$ holds, we consider the following cases:
\begin{itemize}
	\item If $x=0$ or $x,y,z \neq 0$ then clearly the equation holds.
	\item  If $x=a_i,$ $i>0$, $y=z=0$, then
	$x\wedge (0 \to_M 0) {\stackrel{{\eqref{factV}}}{=}}x\wedge 1=x$ and $x\wedge[(x\wedge 0)\to_M (x\wedge 0)]= x\wedge (0\to_M 0){\stackrel{\eqref{factVI}}{=}}x\wedge 1=x$.
	\item If $x>0$ and $y>0$, and $z=0$ then
	$x\wedge (y \to_M 0) {\stackrel{{\eqref{factVII}}}{=}}x\wedge 0=0$ and
	$x\wedge[(x\wedge y)\to_M (x\wedge 0)]=x\wedge[(x\wedge y)\to_M 0]{\stackrel{{\eqref{factVII}}}{=}}x\wedge 0=0$.
\end{itemize}

  For the remaining case, when $x>0, y=0$ and $z>0$  we must prove that $x\land (0\to_M z)=x\land[(x\land 0)\to_M(x\land z)]$, this is, 
  $x\land (0\to_M z)=x\land[0\to_M(x\land z)]$. Here we need to consider how the first row of the matrix is defined. Using as a guide for the definition the Lemma \ref{implicacion}, and more precisely, its consequence indicated in \eqref{factIV}, we define the operation $\to_M$ and prove that (SH3) holds. This proves that all the implication operations thus defined yield semi-Heyting algebras. This, together with Lemma \ref{implicacion}, proves that all the finite semi-Heyting chains are obtained this way.

 For a fixed $j$ with $0\le j\le n-1$, D1) $0\to_M a_h=a_j$, for every  $h>j$, D2) For each $h$ such that $0<h\le j$, $0\to_M a_h=a_{i(h)}$ for some  $i(h)$ such that $h\le i(h)$. this is:
$$
\begin{minipage}{10cm} 
\beginpicture
\setcoordinatesystem units <3mm,3mm>
\setplotarea x from 14 to 32, y from 17 to 30

\put {$ 0$} [c] at  17 33 
\put {$ a_1$} [c] at  21 33 
\put {$ a_j$} [c] at  25 33 
\put {$1$} [c] at  33 33

\put {$\scriptstyle\ge a_1$} [c] at  21 30 
\put {$\scriptstyle\ge a_j$} [c] at  25 30

\put {$ 0$} [c] at  15 30
 \put {$ a_1$} [c] at  15 28
\put {$ 1$} [c] at  15 17

\put {$ 1$} [c] at  17 30 
 
\put {$ a_j$} [c] at  33 30
\put {$ a_j$} [c] at  27 30
\put {$ a_j$} [c] at  29 30
\put {$ \cdots$} [c] at  31 30

\put {$ S$} [c] at  26 23  
\put {$ 0$} [c] at  17 28
\put {$ 0$} [c] at  17 17 
\put {$0$} [c] at  17 20 
 \put {$0$} [c] at  17 23 
\put {$0$} [c] at  17 26

\setlinear \plot 16 16  16  34 /
\setlinear \plot 14 32 34 32  /
\setlinear \plot 19 16  19  29  34   29  34 16 19 16  /
\endpicture
\end{minipage}
$$

Let $x=a_i>0, y=0$ and $z=a_k>0$. 
\begin{itemize}
	\item If $i\le k$, we have:
	\begin{itemize}
		\item $k\le j$. Then $a_i\le a_k\le 0\to a_k$. So $a_i\land(0\to_M a_k)=a_i$ and $a_i\wedge[ 0\to_M (a_i\wedge a_k)]=a_i\wedge[ 0\to_M a_i]=a_i$, since $a_i\le 0\to_M a_i$.
		\item $i\le j<k$. Under these conditions, $a_i\land(0\to_M a_k)=a_i\land a_j=a_i$, while $a_i\wedge[ 0\to_M (a_i\wedge a_k)]=a_i\wedge[ 0\to_M a_i]=a_i$.
		\item $j<i\le k$. Now we have  $a_i\land(0\to_M a_k)=a_i\land a_j=a_j$ while $a_i\wedge[ 0\to_M (a_i\wedge a_k)]=a_i\wedge[ 0\to_M a_i]=a_i\land a_j=a_j$.
	\end{itemize}
			 \item If $k<i$, then $a_i\wedge[ 0\to_M (a_i\wedge a_k)]=a_i\wedge( 0\to_M a_k)$.
\end{itemize}

%

\

We now count how many of different possible definitions there are. For each fixed $j$, $0\le j\le n-1$, $n-(j+1)$ places in the first row are filled with the value $a_j$, the first place is filled by $1$, and there remain $j$ places corresponding to the values of $0\to_Ma_i$ with $1\le i\le j$. Each of these last places can be filled with any element in $[a_i)$, so there are $n-i$ possibilities. We have then, for each $j$, $(n-1)(n-2)\cdots (n-j)= \frac{(n-1)!}{(n-(j+1))!}$ different possible first rows for the matrix $M$. Adding them all, we get a total of 
$\sum\limits_{j=0}^{n-1}\frac{(n-1)!}{(n-(j+1))!}$. Replacing the variable $j$ with $i=n-(j+1)$, we can write this number as $\sum\limits_{i=0}^{n-1}\frac{(n-1)!}{i!}$

Since $N(n) =|\mathcal{M}(n)|$, it is clear that  $N(1)=1$ and that if we add the same first row to two different matrices in $\mathcal{S}(n-1)$, we get two different matrices in $\mathcal {M}(n)$.

Thus we have proved the following Lemma: 
\begin{lemma}\label{form} If  $n \ge 2$ then  
\begin{equation}\label{eqL}
 N(n) =\left(\sum\limits_{i=0}^{n-1} \frac{(n-1)!}{i!}\right) N(n-1).
\end{equation}
\end{lemma}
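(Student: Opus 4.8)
The plan is to exhibit a bijection between $\mathcal{M}(n)$ and the Cartesian product $\mathcal{S}(n-1) \times R(n)$, where $R(n)$ denotes the set of admissible first rows for matrices $M \in \mathcal{M}(n)$; the recursion then follows from the multiplication principle together with the equality $|\mathcal{S}(n-1)| = |\mathcal{M}(n-1)| = N(n-1)$ supplied by the renaming bijection noted at the start of the section.

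First I would make the restriction map $M \mapsto (S, r)$ precise, where $S$ is the $(n-1)\times(n-1)$ submatrix indexed by $1 \le i,j \le n-1$ and $r$ is the first row of $M$. By Lemma \ref{submatrix}, $S \in \mathcal{S}(n-1)$; moreover the first column is entirely forced, since $m_{(0,0)} = 1$ by (SH4) and $m_{(i,0)} = 0$ for $i \ge 1$ by \eqref{factII}. Hence $M$ is completely recovered from the pair $(S,r)$, so the map is injective, and its codomain is exactly $\mathcal{S}(n-1) \times R(n)$. Surjectivity is the content of the construction preceding the statement: starting from any $S \in \mathcal{S}(n-1)$ and any admissible first row $r$, conditions \eqref{factV}--\eqref{factVII} together with the case analysis verifying (SH2), (SH3) and (SH4) show that $M(S)$ lies in $\mathcal{M}(n)$, while Lemma \ref{implicacion} guarantees conversely that every chain in $\mathcal{M}(n)$ has its first row of the prescribed form.

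Next I would count $|R(n)|$. By \eqref{factIV} an admissible first row is determined by first choosing the value $a_j = m_{(0,n-1)}$ for some $0 \le j \le n-1$; this forces $m_{(0,h)} = a_j$ for all $h > j$ and leaves free, for each $h$ with $1 \le h \le j$, the entry $m_{(0,h)} \in [a_h)$, which by \eqref{factI} admits $n-h$ values. Thus for fixed $j$ there are $(n-1)(n-2)\cdots(n-j) = \frac{(n-1)!}{(n-(j+1))!}$ choices, and summing over $j$ and reindexing with $i = n-(j+1)$ yields $|R(n)| = \sum_{i=0}^{n-1} \frac{(n-1)!}{i!}$.

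Finally, since appending the same admissible first row to two distinct matrices of $\mathcal{S}(n-1)$ produces two distinct matrices of $\mathcal{M}(n)$, the bijection gives $N(n) = |\mathcal{M}(n)| = |R(n)| \cdot |\mathcal{S}(n-1)| = \left(\sum_{i=0}^{n-1} \frac{(n-1)!}{i!}\right) N(n-1)$. The main obstacle is not the counting but the completeness of the verification that every admissible pair $(S,r)$ genuinely satisfies (SH3): the delicate case is $x>0,\ y=0,\ z>0$, where one must confirm that the freedom allowed in the first row never breaks the identity, and this is precisely what the case split on the relative positions of $i$, $j$ and $k$ settles.
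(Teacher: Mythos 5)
Your proposal is correct and takes essentially the same approach as the paper: you make explicit the bijection $\mathcal{M}(n)\cong \mathcal{S}(n-1)\times R(n)$ that the paper uses implicitly, with the forced first column, the admissible first rows parametrized by $j$ and counted via \eqref{factI} and \eqref{factIV}, and surjectivity resting on the same verification of (SH2)--(SH4), including the delicate case $x>0$, $y=0$, $z>0$. The only difference is presentational, not mathematical.
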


\begin{corollary} If $n\geq 2$, $N(n)$ is even.
\end{corollary}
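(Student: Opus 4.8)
The plan is to argue by induction on $n$, using the recursion \eqref{eqL} of Lemma \ref{form}. The key structural observation is that the coefficient
\[
c(n) := \sum_{i=0}^{n-1} \frac{(n-1)!}{i!}
\]
is a positive integer for every $n \ge 1$, since each summand $\frac{(n-1)!}{i!} = (n-1)(n-2)\cdots(i+1)$ (which equals $1$ when $i = n-1$) is a product of integers. Thus \eqref{eqL} expresses $N(n)$ as an integer multiple of $N(n-1)$, and any even factor appearing at some stage of the recursion persists from that point onward.

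First I would establish the base case. Evaluating the formula at $n = 2$ and using $N(1) = 1$ gives $N(2) = c(2)\, N(1) = \left(\frac{1!}{0!} + \frac{1!}{1!}\right)\cdot 1 = 2$, which is even.

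For the inductive step, suppose $n \ge 3$ and that $N(n-1)$ is even. Since $c(n)$ is a positive integer and $N(n) = c(n)\, N(n-1)$ by \eqref{eqL}, the number $N(n)$ is an integer multiple of the even number $N(n-1)$, and hence even. This completes the induction and shows that $N(n)$ is even for all $n \ge 2$.

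I do not expect any genuine obstacle here; the only point worth flagging is that one should resist the temptation to prove the statement by showing that the coefficient $c(n)$ itself is always even. This is false: for instance $c(3) = 2 + 2 + 1 = 5$ and $c(5) = 24 + 24 + 12 + 4 + 1 = 65$ are both odd. The evenness of $N(n)$ is therefore not inherited from the current coefficient but is propagated from the base value $N(2) = 2$ through the multiplicative structure of the recursion.
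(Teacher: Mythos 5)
Your proof is correct and matches the paper's (implicit) argument: the corollary is stated without proof precisely because it follows from Lemma \ref{form} in the way you describe, with $N(2)=2$ and each coefficient $\sum_{i=0}^{n-1}\frac{(n-1)!}{i!}$ a positive integer, so evenness propagates through the recursion. Your cautionary remark that the coefficient itself need not be even (e.g.\ $5$ at $n=3$, $65$ at $n=5$, visible in the paper's table) is accurate and correctly identifies the one pitfall.
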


 By Lemma \ref{form} we have:
$$
\begin{minipage}{10cm} 
\begin{tabular}{c||l|r}
$n$&&$N(n)$\\\hline\hline
$1$&&$1$\\\hline
$2$&$2\times 1$&$2$\\\hline
$3$&$5 \times 2$&$10$\\\hline
$4$&$16 \times 10$&$ 160$\\\hline
$5$&$65 \times 160$&$ 10{.}400$\\\hline
$6$&$326 \times 10{.}400$&$ 3{.}390{.}400$\\\hline
$7$&$1{.}957 \times 3{.}390{.}400$&$ 6{.}635{.}012{.}800$\\\hline
\end{tabular}
\end{minipage}
$$

 In {\rm \cite{Abad2010variety}} it was proved that for $n\ge 2$:
\begin{equation} \label{eqF}
N(n)=\prod\limits_{i=0}^{n-2}\left[ 1+(n-i-1)! \sum\limits_{j=i+1}^{n-1} \frac{1}{(n-j-1)!}\right].
\end{equation}

We will now derive now the formula \eqref{eqL} from \eqref{eqF}.

Since  $N(1)=1$ and we know from  \cite{sankappanavar2008semi} that $N(2)=2$, we can observe that if $n=2$ then $\sum\limits_{k=0}^{1} \frac{1!}{k!}=2$, so $\left(\sum\limits_{k=0}^{1} \frac{1!}{k!}\right)N(1)=2\times 1=N(2)$.  

Assume now that  $n\geq 3$. Writing out the first factor in the product, we get:  
$$N(n)=\prod\limits_{i=0}^{n-2}\left[ 1+ \sum\limits_{j=i+1}^{n-1} \frac{(n-i-1)!}{(n-j-1)!}\right]=$$ 
$$\left[1 + \sum\limits_{j=1}^{n-1} \frac{(n-1)!}{(n-j-1)!}\right]\prod\limits_{i=1}^{n-2}\left[ 1+ \sum\limits_{j=i+1}^{n-1} \frac{(n-i-1)!}{(n-j-1)!}\right].$$
We give names to these two factors: $$N_0(n)=1 + \sum\limits_{j=1}^{n-1} \frac{(n-1)!}{(n-j-1)!}$$ and
$$ N_r(n)=\prod\limits_{i=1}^{n-2}\left[ 1+ \sum\limits_{j=i+1}^{n-1} \frac{(n-i-1)!}{(n-j-1)!}\right].$$  

Now we calculate:
$$N_0(n)=1 + \sum\limits_{j=1}^{n-1} \frac{(n-1)!}{(n-j-1)!}=\frac{(n-1)!}{(n-1)!} + \sum\limits_{j=1}^{n-1} \frac{(n-1)!}{(n-j-1)!}=$$
$$\sum\limits_{j=0}^{n-1} \frac{(n-1)!}{(n-j-1)!}.$$

Using the variable $k=n-j-1$, if $j=0$ then $k=n-1$ and if $j=n-1$ then $k=0$, so 
$$N_0(n)=\sum\limits_{k=0}^{n-1} \frac{(n-1)!}{k!}.$$

For the other factor,   
$$ N_r(n) =\prod\limits_{i=1}^{n-2}\left[ 1+ \sum\limits_{j=i+1}^{n-1} \frac{(n-i-1)!}{(n-j-1)!}\right],$$
we put $h=i-1$  so $i=h+1$ and if $i=1$ then $h=0$ and if $i=n-2$ then $h=n-3$. We also write $k=j-1$, so $j=k+1$ and if $j=i+1$ then  $k=i=h+1$ while if $j=n-1$ then $k=n-2$, thus
$$ N_r(n) =\prod\limits_{h=0}^{n-3}\left[ 1+ \sum\limits_{k=h+1}^{n-2} \frac{(n-(h+1)-1)!}{(n-(k+1)-1)!}\right]{\stackrel{{\eqref{eqF}}}{=}}N(n-1).$$
Therefore
$$ N(n) =\left[\sum\limits_{k=0}^{n-1} \frac{(n-1)!}{k!}\right] N(n-1).$$


\begin{thebibliography}{ACDV10}
	
	\bibitem[ACDV10]{Abad2010variety}
	M.~Abad, J.~M. Cornejo, and J.~P. Diaz~Varela.
	\newblock The variety generated by semi-heyting chains.
	\newblock {\em Soft Comput.}, 15(4):721--728, April 2010.
	
	\bibitem[San85]{sankappanavar1985semi}
	Hanamantagouda~P. Sankappanavar.
	\newblock Semi-{H}eyting algebras.
	\newblock {\em Amer. Math. Soc. Abstracts}, page~13, January 1985.
	
	\bibitem[San08]{sankappanavar2008semi}
	Hanamantagouda~P. Sankappanavar.
	\newblock Semi-{H}eyting algebras: an abstraction from {H}eyting algebras.
	\newblock In {\em Proceedings of the 9th ``{D}r. {A}ntonio {A}. {R}.
		{M}onteiro'' {C}ongress ({S}panish)}, Actas Congr. ``Dr. Antonio A. R.
	Monteiro'', pages 33--66, Bah\'\i a Blanca, 2008. Univ. Nac. del Sur.
	
\end{thebibliography}
\bibliographystyle{alpha}

\end{document}